\numberwithin{equation}{section}
\theoremstyle{plain}
\newtheorem{thm}{Theorem}[section]
\newtheorem{lemma}{Lemma}[section]
\newtheorem{proposition}{Proposition}[section]
\theoremstyle{remark}
\newtheorem{remark}{Remark}[section]
\begin{document}

\begin{frontmatter}
\title{Bayesian predictive densities for linear regression models
under $\alpha$-divergence loss: some results and open problems}
\runtitle{Bayesian predictive densities}

\begin{aug}
\author{\fnms{Yuzo} \snm{Maruyama}}
\and
\author{\fnms{William, E.} \snm{Strawderman}}

\address{The University of Tokyo and Rutgers University 
}

\runauthor{Y. Maruyama and W. Strawderman}

\affiliation{The University of Tokyo and Rutgers University}
\end{aug}

\begin{abstract}
This paper considers estimation of the predictive density 
for a normal linear model with unknown variance under $\alpha$-divergence loss 
for $ -1 \leq \alpha \leq 1$.
We first give a general canonical form for the problem, 
and then give general expressions for the generalized Bayes solution 
under the above loss for each $\alpha$. 
For a particular class of hierarchical generalized priors studied 
in Maruyama and Strawderman (2005, 2006) for the problems of estimating 
the mean vector and the variance respectively, 
we give the generalized Bayes predictive density. 
Additionally, we show that, for a subclass of these priors, 
the resulting estimator dominates the generalized Bayes estimator 
with respect to the right invariant prior when $\alpha=1$, i.e., 
the best (fully) equivariant minimax estimator.
\end{abstract}

\begin{keyword}[class=AMS]
\kwd[Primary ]{62C20}
\kwd{62J07}
\kwd[; secondary ]{62F15}
\end{keyword}

\begin{keyword}
\kwd{shrinkage prior}
\kwd{Bayesian predictive density}
\kwd{alpha-divergence}
\kwd{Stein effect}
\end{keyword}


\end{frontmatter}

\section{Introduction}
\label{sec:intro}
We begin with the standard normal linear regression model
setup
\begin{equation} \label{observation}
y \sim N_n(X\beta,\sigma^2I_n) ,
\end{equation}
where $y$ is an $n \times 1$ vector of  observations,
$X$ is an  $n \times k$ matrix of $k$ potential 
predictors where $n > k$ and rank $X=k$, 
and $\beta$ is a $k \times 1$ 
vector of unknown regression coefficients, and $\sigma^2$
is  unknown variance.
Based on observing $y$, we consider the problem of
giving the predictive density $p(\tilde{y}|\beta, \sigma^2)$
of a future $m \times 1$ vector $\tilde{y}$ 
where
\begin{equation}\label{future}
\tilde{y} \sim N_m(\tilde{X}\beta,\sigma^2I_m) .
\end{equation}
Here $ \tilde{X}$ is a fixed 
$m \times k$ design matrix of the same $k$ potential predictors in $X$,
and the rank of $\tilde{X}$ is assumed to be $\min(m,k)$.
We also assume that $y$ and $\tilde{y}$ are conditionally 
independent given $\beta$ and $\sigma^2$.
Note that in most earlier papers on such prediction problems, $\sigma^2$ 
is assumed known, partly because this typically makes the problem less difficult. 
However, the assumption of unknown variance is more realistic, 
and we treat this more difficult case in this paper. 
In the following we denote by $\psi$ all the unknown parameters $\{\beta,\sigma^2\}$.

For each value of $y$, a predictive estimate 
$\hat{p}(\tilde{y};y)$ of $p(\tilde{y}|\psi)$
is often evaluated by the Kullback-Leibler (KL) divergence
\begin{equation} 
\label{KL-loss}
D_{KL}\left\{\hat{p}(\tilde{y};y),  p(\tilde{y}|\psi)\right\}
=\int p(\tilde{y}|\psi) \log
 \frac{p(\tilde{y}|\psi)}{\hat{p}(\tilde{y};y)} d\tilde{y}
\end{equation}
which is called the KL divergence loss from 
$p(\tilde{y}|\psi)$ to $\hat{p}(\tilde{y};y)$.
The overall quality of the procedure $\hat{p}(\tilde{y};y)$
for each $\psi$ is then
conveniently summarized by the KL risk
\begin{equation}
\label{KL-risk}
 R_{KL}(\hat{p}(\tilde{y};y),  \psi)
= \int D_{KL}\left\{\hat{p}(\tilde{y};y),
 p(\tilde{y}|\psi)\right\} p(y|\psi)dy
\end{equation}
where $p(y|\psi)$ is the density of $y$ in \eqref{observation}.
\cite{Aitchison-1975} showed that 
the Bayesian solution with respect to
the prior $\pi(\psi)$ under the loss $D_{KL}$ given by \eqref{KL-loss}
is what is called
the Bayesian predictive density
\begin{equation} 
\label{Bayes-predictive}
\hat{p}_{\pi}(\tilde{y};y) 
=\frac{\int p(\tilde{y}|\psi) 
 p(y|\psi)\pi(\psi)d\psi} 
{\int  p(y|\psi)
\pi(\psi)d\psi} 
=\int p(\tilde{y}|\psi) \pi(\psi| y)d\psi 
\end{equation}
where 
\begin{equation*}
 \pi(\psi| y)
=\frac{p(y|\psi)\pi(\psi)}
{\int p(y|\psi)\pi(\psi) d\psi}.
\end{equation*}
For the prediction problems in general, 
many studies suggest the use of the Bayesian predictive
density rather than plug-in densities of the form
$p(\tilde{y}|\hat{\psi}(y))$, where $\hat{\psi}$ is an estimated
value of $\psi$. 
In our setup of the problem, 
\cite{Liang-Barron-2004} showed that the Bayesian predictive
density with respect to the right invariant prior 
is the best equivariant and minimax.
Although the Bayesian predictive density with respect to the right invariant
prior is a good default procedure, it has been shown to be inadmissible in some cases.
Specifically, when $\sigma^2$ is assumed to be known and the following are assumed
\begin{equation} \label{special-design-matrices} 
\begin{split}
m \geq k \geq 3, \ n=mN \\
X=1_N \otimes \tilde{X}=(\tilde{X}', \dots, \tilde{X}' )' 
\end{split}\tag{AS1} 
\end{equation}
where $N$ is an positive integer, $1_N$ 
is an $N \times 1$ vector each component of which is one,
and $\otimes $ is the Kronecker product,
\cite{Komaki-2001} showed that 
the shrinkage Bayesian predictive density with respect to the harmonic
prior 
\begin{equation}
\pi_{S,0}(\psi)=\pi(\beta)=\{\beta'\tilde{X}'\tilde{X}\beta\}^{1-k/2}
\end{equation}
dominates the best invariant Bayesian predictive density 
with respect to 
\begin{equation}
\pi_{I,0}(\psi)=\pi(\beta)=1.
\end{equation}
\cite{George-etal-2006} extended \cite{Komaki-2001}'s result to
general shrinkage priors including \cite{Strawderman-1971}'s  prior.
As pointed out in the above, we will assume that the variance $\sigma^2$
is unknown in this paper. 
The first decision-theoretic result in the unknown variance case
was derived by \cite{Kato-2009}. 
He showed that, under the same assumption of \cite{Komaki-2001} given by
\eqref{special-design-matrices},
the Bayesian predictive density 
with respect to the shrinkage prior
\begin{equation}\label{kato-prior}
\pi_{S,1}(\psi)=\pi(\beta,\sigma^2)
=\{\beta'\tilde{X}'\tilde{X}\beta\}^{1-k/2}\{\sigma^2\}^{-2}
\end{equation}
dominates the best invariant predictive density which is
the Bayesian predictive density 
with respect to the right invariant prior 
\begin{equation}
\label{right-invariant-prior-unknown}
\pi_{I,1}(\psi)=\pi(\beta,\sigma^2)=\{\sigma^{2}\}^{-1}.
\end{equation}

From a more general viewpoint,
the KL-loss given by \eqref{KL-loss}
is in the class of $\alpha$-divergence 
introduced by \cite{Csiszar-1967} and defined by
\begin{equation} \label{alpha-D}
 D_{\alpha}\{\hat{p}(\tilde{y};y),  p(\tilde{y}|\psi)\}
=\int f_{\alpha}\left( 
\frac{\hat{p}(\tilde{y};y)}{p(\tilde{y}|\psi)}
\right)
p(\tilde{y}|\psi)
d \tilde{y}
\end{equation}
where
\begin{equation*}
 f_{\alpha}(z)=
\begin{cases}
\frac{4}{1-\alpha^2}(1- z^{(1+\alpha)/2}) & \ |\alpha| <1 \\
z\log z & \ \alpha =1 \\
-\log z & \ \alpha = -1.
\end{cases}
\end{equation*}
Clearly the KL-loss given by \eqref{KL-loss} corresponds to $D_{-1}$.
\cite{Corcuera-Giummole-1999} showed that
a generalized Bayesian predictive density under $D_\alpha$
is
\begin{equation} \label{gene-B-density}
 \hat{p}_{\pi,\alpha}(\tilde{y};y) \propto
\begin{cases}
\left[\int p^{(1-\alpha)/2}(\tilde{y}|\psi)
\pi(\psi|y)d\psi
 \right]^{2/(1-\alpha)} & \alpha \neq 1 \\
 \exp\{ \int \log p(\tilde{y}|\psi)\pi(\psi|y)d\psi\} & \alpha=1.
\end{cases}
\end{equation}
Hence
the Bayesian predictive density of the form \eqref{Bayes-predictive}
may not be good under $\alpha$-divergence with $\alpha \neq -1$.
But as \cite{Brown-1979} pointed out in the estimation problem,
decision-theoretic properties often seem to depend on the general
structure of the problem (the general type of problem (location, scale), and
the dimension of the parameter space) and on the prior in a Bayesian-setup,
but not the loss function.
In fact, we will show, 
under the assumption \eqref{special-design-matrices} and the $D_{1}$ loss,
the predictive density with respect to the same shrinkage prior given by
\eqref{kato-prior} improves on the best invariant predictive density
with respect to \eqref{right-invariant-prior-unknown}
(See Section \ref{sec:minimax-D_1}). 
From this viewpoint, we are generally interested in how robust the Stein effect 
already founded under $D_\alpha$ loss for a specific $\alpha$
is under $D_\alpha$ loss for general  $\alpha$.
For example, we can find some concrete problems as follows.
\begin{description}
\item[Problem 1]
Under the assumption \eqref{special-design-matrices} and the $D_{\alpha}$ loss
for $-1<\alpha<1$,
does the predictive density with respect to the same shrinkage prior given by
\eqref{kato-prior} improve on the best invariant predictive density
with respect to \eqref{right-invariant-prior-unknown}? 
\item[Problem 2-1]
Under $D_1$ loss, even if $k=1,2$,
the best invariant predictive density remains inadmissible
because an improved non-Bayesian predictive density is easily found. 
(See Section \ref{sec:minimax-D_1}.)
Can we find improved Bayesian predictive densities for this case ($k=1,2$)?
\item[Problem 2-2]\label{prob2.2}
Under $k=1,2 $ and the $D_\alpha$ loss with $-1 \leq \alpha <1$, 
does the best invariant predictive density keep inadmissibility?
If so, which Bayesian predictive density improve it? 
\end{description}
In this paper, a main focus is on Problem 2-1 and 2-2.
For Problem 2-1, we will give an exact solution.
We could not solve Problem 2-2 in this paper, but by a natural extension of 
the shrinkage prior considered for Problem 2-1 ($D_1 $ loss),
we will provide a class of predictive densities which we hope 
lead the solution in the future work.
In addition, Problem 1 is open.

The organization of this paper is as follows.
We treat not only simple design matrices like \eqref{special-design-matrices}
but also general ones noted at the beginning of this section.
In order to make the structure of our problem clearer,
Section \ref{sec:canonical} gives the canonical form of the problem.
In Section \ref{sec:density}, we consider a natural extension of a
hierarchical prior which was originally 
proposed in \cite{Strawderman-1971} and \cite{Maru-Straw-2005} 
for the problem of estimating $\beta$. Using it,
we will construct a Bayesian predictive density under $D_\alpha$ loss 
for $-1 \leq \alpha < 1$ and $\alpha=1$. 
In Section \ref{sec:minimax-D_1}, we show that a subclass of
the Bayesian predictive densities proposed 
in Section \ref{sec:density} is minimax under $D_1$ loss 
even if $k$ is small.
Section \ref{sec:cr} gives concluding remarks.
\section{A canonical form}
\label{sec:canonical}
In the section, we reduce the problem to a canonical form.
To simplify expressions and to make matters a bit clearer it is helpful
to rotate the problem via the following transformation.
First we note that for the observation $y$,
sufficient statistics are
\begin{equation*}
\begin{split}
 \hat{\beta}_U &=(X'X)^{-1}X'y \sim N_k(\beta,\sigma^2(X'X)^{-1}), \\
 \quad S &=
\|(I- X(X'X)^{-1}X')y\|^2 \sim \sigma^2 \chi_{n-k}^2
\end{split}
\end{equation*}
where $ \hat{\beta}_U$ and $S$ are independent.

\smallskip 

{\bfseries Case I:} When $m \geq k$, 
let $M$ be a nonsingular $k \times k$ matrix which simultaneously
diagonalizes 
matrices $X'X $ and $\tilde{X}'\tilde{X}$, where 
$M$ satisfies
\begin{equation*}
M'(X'X)^{-1}M=\mbox{diag}(d_1, \dots , d_k), \quad  
MM'=\tilde{X}'\tilde{X}
\end{equation*}
where $d_1 \geq \dots \geq d_k$. Let $V=M'\hat{\beta}_U$ and $
\theta=M'\beta$.  

\smallskip

{\bfseries Case II:} When $m<k$, 
there exists an $(k-m)\times k$ matrix $\tilde{X}_*$ such that
$(\tilde{X}', \tilde{X}'_*)'$ is a $k\times k$ non-singular matrix
and also $\tilde{X}(X'X)^{-1}\tilde{X}'_*$ is an $m \times (k-m)$
zero matrix.
Further 
there exists an $m \times m$ orthogonal matrix $P$
which diagonalizes $ \sigma^2\tilde{X}(X'X)^{-1}\tilde{X}'$, 
the covariance matrix of $\tilde{X}\hat{\beta}_U$,
i.e.,
\begin{equation*}
 P'\tilde{X}(X'X)^{-1}\tilde{X}'P=\mbox{diag}(d_1,\dots ,d_m)
\end{equation*}
where $ d_1 \geq \dots \geq d_m$. 
There also exists a $(k-m)\times (k-m)$ matrix $P_*$ such that
\begin{equation*}
 P'_*\tilde{X}_*(X'X)^{-1}\tilde{X}'_* P_*=I_{k-l}.
\end{equation*}
Then $V$ and $V_*$ 
where
\begin{equation*}
\begin{pmatrix}
 V \\ V_* 
\end{pmatrix}
= \begin{pmatrix}
  P' & 0 \\
0 & P'_*
 \end{pmatrix}
\begin{pmatrix}
 \tilde{X} \\
\tilde{X}_*
\end{pmatrix}
\hat{\beta}_U 
\end{equation*}
are independent and have multivariate normal distributions
$N_{m}(P'\tilde{X}\beta,\sigma^2D)$ and
$N_{k-m}(P'_*\tilde{X}_*\beta,\sigma^2I_{k-m})$
respectively. Let $\theta=P'\tilde{X}\beta$ and
$\mu =P'_*\tilde{X}_*\beta$.

\smallskip

In summary, a 
canonical form of the prediction problem
is as follows.
We observe
\begin{equation} 
\label{canonical-observation}
 V \sim N_l(\theta,\eta^{-1} D), \quad V_* \sim N_{k-l}(\mu,\eta^{-1} I),
 \quad \eta S \sim  \chi_{n-k}^2
\end{equation}
where $\eta=\sigma^{-2}$, $l=\min(k,m)$, $D=\mbox{diag}(d_1,\dots,d_l)$ and $d_1 \geq
\dots \geq d_l$. When $m \geq k$, $V_*$ is empty.
Then the problem is  to give a predictive density of 
an $m$-dimensional future observation
\begin{equation} 
\label{canonical-tilde-y}
 \tilde{Y} \sim N_m(Q\theta, \eta^{-1}I_m)
\end{equation}
where $Q$ is an $m\times l$ matrix, which 
is given by
\begin{equation*}
 Q=\begin{cases}
    P & \mbox{if }m<k \\
\tilde{X}(M')^{-1} & \mbox{if }m \geq k,
   \end{cases}
\end{equation*}
and hence satisfies $Q'Q=I_l$.
Notice that, under the assumption given by\eqref{special-design-matrices},
$D$ becomes $N^{-1}I_k$, $V_*$ is empty, and
$Q$ becomes $\tilde{X}(\tilde{X}'\tilde{X})^{-1/2}$.

The distribution of $\tilde{y}$ in \eqref{canonical-tilde-y} 
is the same as in \eqref{future},
 so it is just the $\tilde{y}$'s that have been transformed.
In the remainder of the paper, we will consider the problem
in its canonical form, \eqref{canonical-observation} 
and \eqref{canonical-tilde-y}.
We will  use the notation $\hat{p}(\tilde{y}|y)$ in the following
although it may be more appropriate to use $\hat{p}(\tilde{y}|v,v_*,s)$ 
or $\hat{p}(\tilde{y}|\hat{\beta}_U,s)$.

\section{A class of generalized Bayes predictive densities}
\label{sec:density}
In this section,
we consider the following class of hierarchical prior densities,
$\pi(\theta,\mu,\eta)$, for 
the canonical model given by \eqref{canonical-observation} 
and \eqref{canonical-tilde-y}.
\begin{equation} 
\label{our-canonical-prior}
\begin{split}
\theta|\eta,\lambda &\sim N_l \left(0,\eta^{-1}
\left(D^{-1}+\{(1-\alpha)/2\}I_l \right)^{-1}
\left( C/\lambda -I_l \right)\right)  \\ 
\mu|\eta,\lambda &\sim N_{k-l}\left( 0,\eta^{-1} (\gamma/\lambda-1)
 I_{k-l}\right) \\
\eta & \propto \eta^a, \quad \lambda \propto
 \lambda^a(1-\lambda)^bI_{(0,1)}(\lambda), 
\end{split} 
\end{equation}
where $C=\mbox{diag}(c_1,\dots,c_l)$ with $c_i \geq 1$ for $1\leq i \leq
l$, $b=b(\alpha)=(1-\alpha)m/4+(n-k)/2-1$ 
and $\gamma \geq 1$. The integral which appears in 
the Bayesian predictive density below will be well-defined when
$a> -k/2-1$.
An essentially equivalent class was considered 
for the problem of estimating $\theta$ and $\sigma^2$ 
in \cite{Maru-Straw-2005, Maru-Straw-2006} respectively.
When $m \geq k$, the prior on $\mu$ is empty and 
we have only to eliminate $\|V_*\|^2/\gamma$ from the representation of
the Bayesian solution in the following theorems 
\ref{thm:case-1} and \ref{thm:case-2}, 
in order to have the corresponding result.

\subsection{Case i: $\alpha \in [-1, 1)$}
\label{sec:case-1}
\begin{thm} \label{thm:case-1}
 The generalized Bayes predictive density under
$D_\alpha$ divergence with respect to the prior \eqref{our-canonical-prior}
is given by
\begin{equation}
\hat{p}_\alpha(\tilde{y}|y) \propto
\hat{p}_{\{U,\alpha\}}(\tilde{y}|y) \times \check{p}_\alpha(\tilde{y}|y),
\end{equation}
where
\begin{equation}
\begin{split}
\hat{p}_{\{U,\alpha\}}(\tilde{y}|y) &=
\left\{ (\tilde{y}-Qv)'\Sigma_U^{-1}
(\tilde{y}-Qv) 
+ s \right\}^{-m/2-(n-k)/(1-\alpha) }, \\
\check{p}_\alpha(\tilde{y}|y) & =
\left\{ (\tilde{y}-Q\hat{\theta}_B)'
\Sigma_B^{-1}(\tilde{y}-Q\hat{\theta}_B)
+R+\frac{\|v_*\|^2}{\gamma}+s
\right\}^{-\frac{k+2a+2}{1-\alpha}} ,
\end{split}
\end{equation}
and where
\begin{equation}
\label{Sigma-U-etc}
\begin{split}
\Sigma_U &= \{2/(1-\alpha)\}I+QDQ'  \\
\hat{\theta}_B &=(C-I)(C+(1-\alpha)D/2)^{-1}v  \\ 
\Sigma_B &=
\{2/(1-\alpha)\}I+Q(C-I)D(C+\{(1-\alpha)/2\}D)^{-1}Q'  \\
R(v) &= v'(\{(1-\alpha)/2\}D+I)D^{-1}(C+\{(1-\alpha)/2\}D)^{-1}v 
\end{split}
\end{equation}
\end{thm}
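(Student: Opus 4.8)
The plan is to apply formula \eqref{gene-B-density}, which for $\alpha\in[-1,1)$ reduces the task to evaluating the single integral
\[
I(\tilde y)=\int p^{(1-\alpha)/2}(\tilde y|\theta,\eta)\,\pi(\theta,\mu,\eta|v,v_*,s)\,d\theta\,d\mu\,d\eta
\]
and then raising the result to the power $2/(1-\alpha)$. Rather than normalizing the posterior, I would work with the unnormalized joint density built from \eqref{our-canonical-prior} and the normal likelihood in \eqref{canonical-observation}, and integrate out, in order, the Gaussian blocks $\theta$ and $\mu$, the precision $\eta$, and finally the hyperparameter $\lambda$; every factor independent of $\tilde y$ (including the posterior normalizer and the gamma/Beta constants) is absorbed into the proportionality.

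First I would perform the two Gaussian integrations. Writing $W=D^{-1}+\{(1-\alpha)/2\}I$, the $\theta$-exponent is $-\tfrac{\eta}{2}$ times a quadratic form with Hessian $A=(C/\lambda-I)^{-1}(C/\lambda)W$, and the feature I would exploit is that the prior normalizer $|\Phi|^{-1/2}$ and the Gaussian factor $|A|^{-1/2}$ collapse to $|C/\lambda|^{-1/2}\propto\lambda^{l/2}$; the analogous cancellation for $\mu$ yields $\lambda^{(k-l)/2}$ together with the term $\lambda\|v_*\|^2/\gamma$ in the exponent. The decisive structural fact is that $A^{-1}=W^{-1}(I-\lambda C^{-1})$ is affine in $\lambda$, so the completed exponent is affine in $\lambda$: the coefficient of $-\tfrac{\eta}{2}$ equals $T_0+\lambda T_1$, where $T_0$ gathers the $\lambda$-free part (with $s$ included) and $T_1$ gathers the coefficient of $\lambda$ (with $\|v_*\|^2/\gamma$ included). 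Integrating out $\eta$ against $\eta^{(1-\alpha)m/4+n/2+a}$ is then a gamma integral and produces $(T_0+\lambda T_1)^{-(c+1)}$ with $c+1=(1-\alpha)m/4+n/2+a+1$.

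The heart of the argument is the $\lambda$-integration. At this stage $I(\tilde y)$ is proportional to $\int_0^1\lambda^{a+k/2}(1-\lambda)^{b}\,(T_0+\lambda T_1)^{-(c+1)}\,d\lambda$, and I would write $T_0+\lambda T_1=T_0(1-\lambda)+(T_0+T_1)\lambda$ and invoke the Beta-type identity
\[
\int_0^1\lambda^{q-1}(1-\lambda)^{p-1}\bigl[a_0(1-\lambda)+a_1\lambda\bigr]^{-(p+q)}\,d\lambda=B(p,q)\,a_0^{-p}\,a_1^{-q},
\]
which follows from the gamma representation of $[\,\cdots]^{-(p+q)}$ together with the substitution $(x,y)=(t(1-\lambda),t\lambda)$. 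The place where the special value $b=b(\alpha)=(1-\alpha)m/4+(n-k)/2-1$ earns its keep is that it forces the exponents to match: with $p=b+1$ and $q=a+k/2+1$ one gets $p+q=c+1$ exactly, the identity applies, and it delivers the product $T_0^{-p}(T_0+T_1)^{-q}$; the integrability constraint $q>0$ is precisely the stated condition $a>-k/2-1$. Raising to the $2/(1-\alpha)$ power turns $p,q$ into the exponents $m/2+(n-k)/(1-\alpha)$ and $(k+2a+2)/(1-\alpha)$ appearing in $\hat p_{\{U,\alpha\}}$ and $\check p_\alpha$.

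What remains is to identify $T_0$ and $T_0+T_1$ with the quadratic forms in \eqref{Sigma-U-etc}. Using the Sherman--Morrison--Woodbury formula and $Q'Q=I_l$ I would obtain $\Sigma_U^{-1}=\{(1-\alpha)/2\}I-\{(1-\alpha)/2\}^2QW^{-1}Q'$, whence $T_0=(\tilde y-Qv)'\Sigma_U^{-1}(\tilde y-Qv)+s$ after using $W^{-1}W=I$; the same computation with $F=(C-I)D(C+\{(1-\alpha)/2\}D)^{-1}$ in place of $D$ produces $\Sigma_B^{-1}$, pins the cross term down as the Bayes estimator $\hat\theta_B$, and leaves the residual $v$-quadratic equal to $R$. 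Since $C$, $D$ and $W$ are diagonal and everything else is sandwiched by $Q$, each of these reduces to a scalar identity per coordinate, so this last step is only bookkeeping. I expect the genuinely delicate points to be verifying the affine-in-$\lambda$ structure of the exponent and the exponent match $p+q=c+1$, since these are exactly what produce the two-factor form and explain the otherwise mysterious choice of $b(\alpha)$.
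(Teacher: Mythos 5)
Your proposal is correct and follows essentially the same route as the paper's own proof: sequential integration of the Gaussian blocks $\theta$ and $\mu$ (with the determinant cancellations producing $\lambda^{k/2}$), the observation that the completed-square residual is affine in $\lambda$, the gamma integral over $\eta$, the exponent match forced by the choice $b=b(\alpha)$ that collapses the $\lambda$-integral into the two-factor product, and Woodbury-type identities with $Q'Q=I_l$ to identify $\Sigma_U$, $\Sigma_B$, $\hat{\theta}_B$ and $R$. The only cosmetic differences are that you prove the Beta-type factorization directly via a gamma-representation (Dirichlet) argument where the paper invokes the identity \eqref{annals-formula} of Maruyama and Strawderman (2005), and that you identify the $\lambda=1$ quadratic form by reusing the $\Sigma_U$ computation with $D$ replaced by $(C-I)D(C+\{(1-\alpha)/2\}D)^{-1}$ where the paper proves the general matrix identity in Lemma \ref{lemma-appendix}.
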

\begin{proof}
 See Appendix.
\end{proof}
The first term $ \hat{p}_{\{U,\alpha\}}(\tilde{y}|y)$ is
the best invariant predictive density, and is Bayes with 
respect to the right invariant prior $\pi(\theta,\mu,\eta)=\eta^{-1}$.
Upon normalizing, $ \hat{p}_{\{U,\alpha\}}(\tilde{y}|y)$ is
multivariate-$t$ with the mean $Qv=\tilde{X}\hat{\beta}_U$.
We omit the straightforward calculation.
\cite{Liang-Barron-2004} show that $ \hat{p}_{\{U,\alpha\}}(\tilde{y}|y)$
has a constant minimax risk.

The second term, $\check{p}_\alpha$, is a pseudo multivariate-$t$
density with the mean vector $Q\hat{\theta}_B$. 
Since $\| \hat{\theta}_B\| \leq \|v\|$ is clearly satisfied, $\check{p}_\alpha$
induces a shrinkage effect toward the origin.
The complexity in the second term  is reduced considerably
with the choice $C=I$, in which case $\hat{\theta}_B=0$, 
$\Sigma_B=\{2/(1-\alpha)\}I$ and $R(v)=v'D^{-1}v$.
However, since the covariance matrix of $v$, $\sigma^2 D$,
is diagonal but not necessarily a multiple of $I$, the introduction 
of $C \neq I$ seems reasonable.
Indeed in the context of ridge regression, \cite{Casella-1980}
and \cite{Maru-Straw-2005}
have argued that shrinking unstable components more than
stable components is reasonable.
An ascending sequence of $c_i$'s leads to this end.
Hence the complexity, while perhaps not pleasing, is nevertheless 
potentially useful.

\subsection{Case ii: $ \alpha =1$}
\label{sec:case-2}
\begin{thm} \label{thm:case-2}
 The generalized Bayes predictive distribution under
$D_1$ divergence with respect to the prior \eqref{our-canonical-prior}
is  normal distribution $N_m( \hat{\theta}_{\nu,C}, \hat{\sigma}^2_{\nu,C} I_m) $
where
\begin{equation*}
\begin{split}
 \hat{\theta}_{\nu,C} &=\left( I - \frac{\nu}{\nu+1+W}C^{-1}\right)V \\
\hat{\sigma}^2_{\nu,C} &= \left(1- \frac{\nu}{\nu+1+W}\right)\frac{S}{n-k}
\end{split}
\end{equation*}
and where $W=\{V'C^{-1}D^{-1}V+\|V_*\|^2/\gamma\}/S$ and $\nu=(k+2a+2)/(n-k)$.
\end{thm}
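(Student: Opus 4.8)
The plan is to specialize the Corcuera--Giummol\'e representation \eqref{gene-B-density} to $\alpha=1$, for which $\hat p_1(\tilde y|y)\propto \exp\{\int \log p(\tilde y|\psi)\,\pi(\psi|y)\,d\psi\}$. Since $\log p(\tilde y|\psi)=\mathrm{const}+\tfrac{m}{2}\log\eta-\tfrac{\eta}{2}(\tilde y-Q\theta)'(\tilde y-Q\theta)$ and $Q'Q=I_l$, every term that survives the posterior integration is at most quadratic in $\tilde y$, giving
\[
\int \log p(\tilde y|\psi)\,\pi(\psi|y)\,d\psi=\mathrm{const}-\tfrac{1}{2} E[\eta|y]\,\tilde y'\tilde y+\tilde y'Q\,E[\eta\theta|y].
\]
Completing the square then identifies $\hat p_1$ as the normal density with mean $Q\,E[\eta\theta|y]/E[\eta|y]$ and covariance $\big(E[\eta|y]\big)^{-1}I_m$, so the whole theorem reduces to evaluating the posterior moments $E[\eta|y]$ and $E[\eta\theta|y]$ under the prior \eqref{our-canonical-prior} and matching them to $\hat\theta_{\nu,C}$ and $\hat\sigma^2_{\nu,C}$.

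First I would integrate out the location parameters. At $\alpha=1$ the prior is conditionally conjugate, and a routine Gaussian completion of the square gives $E[\theta|\eta,\lambda,y]=(I-\lambda C^{-1})V$, independent of $\eta$, with the analogous statement for $\mu$ in terms of $V_*$. Carrying the Gaussian normalizing constants along, the joint posterior collapses to
\[
\pi(\eta,\lambda|y)\propto \eta^{\,n/2+a}\,\lambda^{\,k/2+a}\,(1-\lambda)^{b}\,\exp\!\Big\{-\tfrac{\eta}{2}\big(\lambda T+S\big)\Big\},\qquad T=V'C^{-1}D^{-1}V+\tfrac{\|V_*\|^2}{\gamma},
\]
so that $W=T/S$. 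Because the conditional mean of $\theta$ does not involve $\eta$, the tower property gives $E[\eta\theta|y]=\big(E[\eta|y]\,I-E[\eta\lambda|y]\,C^{-1}\big)V$, whence the predictive mean is $Q\hat\theta_{\nu,C}$ with $\hat\theta_{\nu,C}=(I-\rho C^{-1})V$, $\rho=E[\eta\lambda|y]/E[\eta|y]$, and $\hat\sigma^2_{\nu,C}=1/E[\eta|y]$. Performing the $\eta$-integral (a Gamma integral) reduces $\rho$ and $E[\eta|y]$ to ratios of integrals of the form $\int_0^1 \lambda^{s}(1-\lambda)^{b}(1+\lambda W)^{-r}\,d\lambda$ with $s=k/2+a$.

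The crux is evaluating these one-dimensional integrals, which I would do through Euler's representation $\int_0^1 \lambda^{s}(1-\lambda)^{b}(1+\lambda W)^{-r}\,d\lambda=B(s+1,b+1)\,{}_2F_1(r,s+1;s+b+2;-W)$ with $B$ the beta function. The decisive structural fact is that the prescribed exponent $b=b(1)=(n-k)/2-1$ forces $s+b+2=n/2+a+1$ to coincide with the Gamma shape produced by the $\eta$-integral; this alignment collapses the relevant ${}_2F_1$'s by elementary identities, namely the binomial sum ${}_2F_1(c,\beta;c;-W)=(1+W)^{-\beta}$ together with the contiguous relation ${}_2F_1(c+1,\beta;c;-W)=(1+W)^{-\beta-1}\big(1+W-\beta W/c\big)$. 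I expect this parameter matching to be the only real obstacle: it is precisely why $b(\alpha)$ is defined as it is, and the bookkeeping of the several Gamma shapes is where an error would most likely creep in. Once the collapses are carried out one obtains
\[
\rho=\frac{s+1}{\,s+b+2+(b+1)W\,},\qquad E[\eta|y]=\frac{2\big(s+b+2+(b+1)W\big)}{S\,(1+W)}.
\]
Finally, writing $\nu=(s+1)/(b+1)=(k+2a+2)/(n-k)$ and $n-k=2(b+1)$, a short algebraic rearrangement turns $\rho$ into $\nu/(\nu+1+W)$ and $1/E[\eta|y]$ into $\big(1-\nu/(\nu+1+W)\big)S/(n-k)$, which are exactly the claimed $\hat\theta_{\nu,C}$ and $\hat\sigma^2_{\nu,C}$.
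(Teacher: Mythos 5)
Your proposal is correct, and its first half coincides with the paper's own proof: both specialize the Corcuera--Giummol\'e form \eqref{gene-B-density} at $\alpha=1$ and conclude that the Bayes rule is the plug-in normal density with mean $Q\,E[\eta\theta|y]/E[\eta|y]$ and variance $1/E[\eta|y]$. The two arguments then part ways in how these posterior moments are evaluated. The paper writes the estimators as logarithmic-derivative functionals of a marginal, $\hat\theta_\pi=v-D\nabla_v m/(2\{\partial/\partial s\}m)$ and $\hat\sigma^2_\pi=-m/(2\{\partial/\partial s\}m)$ as in \eqref{marginal-estimator}, computes $m(v,v_*,s)$ in closed form by the same $\lambda$-integral collapse used for Theorem \ref{thm:case-1} (the identity \eqref{annals-formula}), and then differentiates. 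You instead integrate the hierarchy directly: conjugacy gives $E[\theta|\eta,\lambda,y]=(I-\lambda C^{-1})V$ free of $\eta$, the tower property reduces everything to $E[\eta|y]$ and $E[\eta\lambda|y]$, and these follow from a Gamma integral in $\eta$ and Euler-type $\lambda$-integrals that collapse because $b(1)=(n-k)/2-1$ forces the ${}_2F_1$ parameters to coincide; your binomial and contiguous-relation identities are exactly the content of \eqref{annals-formula} with $-\alpha-\beta+\gamma-2=0$, so both proofs hinge on the identical structural alignment of exponents. I checked your intermediate formulas: the posterior $\pi(\eta,\lambda|y)\propto\eta^{n/2+a}\lambda^{k/2+a}(1-\lambda)^b\exp\{-\eta(\lambda T+S)/2\}$, the values $\rho=(s+1)/(s+b+2+(b+1)W)$ and $E[\eta|y]=2\{s+b+2+(b+1)W\}/\{S(1+W)\}$, and the final algebra with $\nu=(s+1)/(b+1)$ all hold, so the proof is complete. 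What your route buys: it is self-contained (no Brown-type derivative representation needed) and it sidesteps a point the paper leaves implicit, namely that \eqref{marginal-estimator} is valid only when $m$ is defined through the kernel $\eta^{(n-k)/2}e^{-\eta s/2}$ rather than the full chi-square density of $S$ (the omitted factor $s^{(n-k)/2-1}$ matters once one differentiates in $s$); since you condition on the data throughout, no such convention ever arises. What the paper's route buys is economy: one closed-form marginal from which both estimators drop out by differentiation, reusing verbatim the computation already done for Theorem \ref{thm:case-1}.
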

\begin{proof}
See Appendix.
\end{proof}
It is quite interesting to note that
the Bayesian predictive density $\hat{p}_\alpha(\tilde{y}|y)$ for
$\alpha \in [-1,1)$ given in Section \ref{sec:case-1} converges to
$ \phi_m(\tilde{y}, Q \hat{\theta}_{\nu,C}, \hat{\sigma}^2_{\nu,C})$
 as $\alpha \to 1$
where  $\phi_m(\cdot, \xi, \tau^2)$ 
denotes the $m$-variate normal density with the mean vector
$\xi $ and
the covariance matrix $\tau^2 I_m$.

Since the Bayes solution is the plug-in predictive density as shown in 
Theorem \ref{thm:case-2}, 
we pay attention only to
the properties of plug-in predictive density under the loss $D_{1}$.
The $\alpha$-divergence with $\alpha=1$, 
from $\phi_m(\tilde{y}, Q\hat{\theta}, \hat{\sigma}^2)$,
the predictive normal density with plug-in
estimators $\hat{\theta}$ and $\hat{\sigma}^2$,
to $\phi_m(\tilde{y}, Q\theta, \sigma^2) $, the true normal density,
is given by
\begin{equation}
\label{loss-1}
\begin{split}
& \int \log \frac{\phi_m(\tilde{y}, Q\hat{\theta}, \hat{\sigma}^2)}
{\phi_m(\tilde{y}, Q\theta, \sigma^2)}\phi_m(\tilde{y},
 Q\hat{\theta}, \hat{\sigma}^2)d\tilde{y}  \\
& = \int \left\{ -\frac{m}{2} \log \frac{\hat{\sigma}^2}{\sigma^2}+
\frac{\| \tilde{y}- Q\theta \|^2}{2\sigma^2}
-\frac{\| \tilde{y}- Q\hat{\theta}\|^2}{2\hat{\sigma}^2} \right\}
\phi_m(\tilde{y}, Q\hat{\theta}, \hat{\sigma}^2)d\tilde{y}   \\
& = -\frac{m}{2} \log \frac{ \hat{\sigma}^2}{\sigma^2}
- \frac{m}{2} + \int \left\{ \frac{\| \tilde{y}- Q\hat{\theta} +Q\hat{\theta} 
-Q\theta \|^2}{2\sigma^2}\right\}
\phi_m(\tilde{y}, Q\hat{\theta}, \hat{\sigma}^2)d\tilde{y}   \\
&= \frac{\| \hat{\theta} -\theta \|^2}{2\sigma^2}+
\frac{m}{2} \left\{ \frac{\hat{\sigma}^2}{\sigma^2}- \log
\frac{\hat{\sigma}^2}{\sigma^2}-1\right\}  \\
&=\frac{1}{2}\left\{ L_1(\hat{\theta},\theta,\sigma^2)
+m  L_2(\hat{\sigma}^2,\sigma^2) \right\}.
\end{split}
\end{equation}
In \eqref{loss-1}, $L_1$ denotes
the scale invariant quadratic loss 
\begin{equation*}
 L_1(\hat{\theta},\theta,\sigma^2)
=\frac{(\hat{\theta}-\theta)'(\hat{\theta}-\theta)}{\sigma^2} 
\end{equation*}
for $\theta$ 
and $L_2$ denotes the Stein's or entropy loss
\begin{equation*}
  L_2(\hat{\sigma}^2,\sigma^2)=\frac{\hat{\sigma}^2}{\sigma^2}-\log
\frac{\hat{\sigma}^2}{\sigma^2}-1,
\end{equation*}
for $\sigma^2$.
Hence when the prediction problem under $\alpha$-divergence
with $\alpha = 1$
is considered from the Bayesian point of view,
the Bayesian solution is the normal distribution with plug-in 
Bayes estimators and
 the prediction problem reduces to the simultaneous
estimation problem of $\theta$ and $\sigma^2$ under the sum of losses
as in \eqref{loss-1}. 

%

\section{Improved minimax predictive densities under $D_1$}
\label{sec:minimax-D_1}
In this section, we give analytical results
on minimaxity under $D_1$ loss.
As pointed out in the previous section,
 the prediction problem under $D_1$ loss, reduces to the simultaneous
estimation problem of $\theta$ and $\sigma^2$ under the sum of losses
as in \eqref{loss-1}. 
Clearly the UMVU estimators of $\theta$ and $\sigma^2$ 
are $ \hat{\theta}_U=V  $ and $ \hat{\sigma}_U^2=S/(n-k)$.
These are also
generalized Bayes estimators with respect to
the the right invariant prior $ \pi(\theta, \mu, \eta)=\eta^{-1}$ and are hence minimax.
The constant minimax risk is
given by $\mbox{MR}_{\theta,\sigma^2}$ where
\begin{equation} 
\label{m-risk}
\mbox{MR}_{\theta,\sigma^2}= \frac{1}{2}\left\{ \mbox{tr}D
+m\left( \log \gamma - \frac{\Gamma'(\gamma)}{\Gamma(\gamma)}\right)\right\}
\end{equation}
and $\gamma=(n-k)/2$.

Recall that from observation $y$, 
there exist independent sufficient
statistics given by \eqref{canonical-observation}:
\begin{equation*} 
 V \sim N_l(\theta,\eta^{-1} D), \quad V_* \sim N_{k-l}(\mu,\eta^{-1} I),
 \quad \eta S \sim  \chi_{n-k}^2
\end{equation*}
where $\eta=\sigma^{-2}$, $l=\min(k,m)$, 
$D=\mbox{diag}(d_1,\dots,d_l)$ and $d_1 \geq \dots \geq d_l$. 
When $m \geq k$, $V_*$ is empty.

In the variance estimation problem of $\sigma^2$ under $L_2$,
\cite{Stein-1964} showed that $S/(n-k)$ is dominated by
\begin{equation}
\hat{\sigma}^2_{ST}= \min\left( \frac{S}{n-k}, \frac{V'D^{-1}V+S}{l+n-k}\right),
\end{equation}
for any combination of $\{ n,k,m \}$ including even $l=\min(k,m)=1$.
Hence, in the simultaneous estimation problem of $\theta$ and $\sigma^2$,
we easily see that $ \{\hat{\theta}_U, \hat{\sigma}_U^2\}$
is dominated by $\{\hat{\theta}_U, \hat{\sigma}^2_{ST}\}$ and hence
have the following result.
\begin{proposition}
The estimator $\{\hat{\theta}_U,\hat{\sigma}^2_U\}$ is inadmissible 
for any combination of $\{ n,k,m \}$.
\end{proposition}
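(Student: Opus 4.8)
The plan is to exploit the additive structure of the combined loss so that the two coordinates of the estimator can be improved independently. Writing the total loss as $L_1(\hat{\theta},\theta,\sigma^2)+mL_2(\hat{\sigma}^2,\sigma^2)$, the risk of any procedure $\{\hat{\theta},\hat{\sigma}^2\}$ splits as
\begin{equation*}
R(\{\hat{\theta},\hat{\sigma}^2\},\psi)=E[L_1(\hat{\theta},\theta,\sigma^2)]+mE[L_2(\hat{\sigma}^2,\sigma^2)].
\end{equation*}
First I would observe that the scale-invariant quadratic loss $L_1(\hat{\theta},\theta,\sigma^2)=\|\hat{\theta}-\theta\|^2/\sigma^2$ depends on $\sigma^2$ only through the denominator and not at all on the variance estimate $\hat{\sigma}^2$. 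Consequently, holding the mean estimate fixed at $\hat{\theta}=\hat{\theta}_U=V$ and altering only the variance estimate leaves the $L_1$-component of the risk completely unchanged.

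Next I would compare the two procedures $\{\hat{\theta}_U,\hat{\sigma}^2_U\}$ and $\{\hat{\theta}_U,\hat{\sigma}^2_{ST}\}$, which differ only in the second coordinate. By the previous paragraph their $L_1$-risks coincide, so the entire risk comparison reduces to the $L_2$-risks of $\hat{\sigma}^2_U$ and $\hat{\sigma}^2_{ST}$. Here I invoke Stein's (1964) result, quoted immediately above the proposition, that $\hat{\sigma}^2_{ST}=\min(S/(n-k),(V'D^{-1}V+S)/(l+n-k))$ dominates $S/(n-k)=\hat{\sigma}^2_U$ under $L_2$ for every combination of $\{n,k,m\}$, including the low-dimensional case $l=1$.

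Combining the two observations yields $R(\{\hat{\theta}_U,\hat{\sigma}^2_{ST}\},\psi)\le R(\{\hat{\theta}_U,\hat{\sigma}^2_U\},\psi)$ for all $\psi$, with strict inequality somewhere, so $\{\hat{\theta}_U,\hat{\sigma}^2_U\}$ is inadmissible for any combination of $\{n,k,m\}$. There is no genuine obstacle here once Stein's theorem is taken as given; the only point requiring care is the verification that the quadratic-loss component is insensitive to the choice of variance estimator, which is precisely what legitimizes the one-coordinate improvement. The essential content — and the reason the conclusion persists even for small $k$ — lies entirely in Stein's variance-estimation result, since an attempt to improve the mean component alone via a James--Stein shrinkage of $V$ would fail in the range $l\le 2$.
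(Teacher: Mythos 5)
Your proof is correct and is essentially identical to the paper's argument: the paper likewise fixes $\hat{\theta}_U$, invokes Stein's (1964) domination of $S/(n-k)$ by $\hat{\sigma}^2_{ST}$ under $L_2$ for every $\{n,k,m\}$ (including $l=1$), and uses the additive decomposition of the $D_1$ loss in \eqref{loss-1} to conclude that $\{\hat{\theta}_U,\hat{\sigma}^2_{ST}\}$ dominates $\{\hat{\theta}_U,\hat{\sigma}^2_U\}$. Your explicit remark that the $L_1$-component is unaffected by the choice of variance estimator is exactly the (implicit) justification in the paper, so nothing is missing.
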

The improved solution, $\{\hat{\theta}_U, \hat{\sigma}^2_{ST}\}$,
is unfortunately not Bayes.
When $l \geq 3$ and
\begin{equation} \label{assumption:d}
\textstyle{l-2 \leq 2\left(d_1^{-1}\sum_{i=1}^l d_i -2 \right)},
\end{equation}
we can construct a Bayesian solution using our earlier studies as follows.
In the estimation problem of $\theta$ under $L_1$, 
\cite{Maru-Straw-2005}
showed that the generalized Bayes estimator of $\theta$ with respect to 
the harmonic-type prior 
\begin{equation} \label{harmonic-1}
\pi_{S,1}(\theta, \eta)=\{\theta'D^{-1}\theta\}^{1-l/2}
\end{equation}
improves on the UMVU estimator $ \hat{\theta}_U$
when $l \geq 3$ and \eqref{assumption:d} is satisfied.
In the variance estimation problem of $\sigma^2$ under $L_2$,
although \cite{Maru-Straw-2006} did not state so explicitly, 
they showed that the generalized Bayes estimator of $\sigma^2$ with respect to 
the same prior \eqref{harmonic-1}
dominates the UMVU estimator $\hat{\sigma}_U^2$ when $l \geq 3$.
Hence the prior \eqref{harmonic-1} gives an improved Bayesian solution
in the simultaneous estimation problem of $\theta$ and $\sigma^2$
when $l \geq 3$ and \eqref{assumption:d} is satisfied.
(Note that under the special assumption \eqref{special-design-matrices} 
introduced in Section \ref{sec:intro}, 
$D$ becomes the multiple of identity matrix
and hence \eqref{assumption:d} is automatically satisfied.)

However, in the above construction of the Bayesian solution, 
two assumptions, $l \geq 3$ and \eqref{assumption:d}, are needed.
Further even if $m <k$ and $V_*$ exists, the Bayes procedure does not depend on $V_*$.
This is not desirable because the statistic $V_*$ has some information about
$\eta$ or $\sigma^2$. In fact, the Stein-type estimator of variance 
\begin{equation}
\hat{\sigma}^2_{ST*}=\min\left( \frac{S}{n-k}, \frac{\|V_*\|^2+S}{n-l}\right),
\end{equation}
as well as $\{\hat{\sigma}^2_{ST}\}$ dominates $\hat{\sigma}_U^2$ and hence
$\{\hat{\theta}_U, \hat{\sigma}^2_{ST*}\}$ also dominates 
$ \{\hat{\theta}_U, \hat{\sigma}_U^2\}$
in the simultaneous estimation problem.

Now we show that a subclass of the generalized Bayes procedure under $D_1$ 
given in Section \ref{sec:case-2} improves on the generalized Bayes procedure
with respect to the right invariant prior.
We assume neither $l \geq 3$ nor \eqref{assumption:d}.
Additionally the proposed procedure does depend on $V_*$ if it exists.
%

\begin{thm} \label{thm:minimaxity}
The generalized Bayes estimators of Theorem \ref{thm:case-2},
\begin{equation*}
\begin{split}
 \hat{\theta}_{\nu,C} &=\left( I - \frac{\nu}{\nu+1+W}C^{-1}\right)V \\
\hat{\sigma}^2_{\nu,C} &= \left(1- \frac{\nu}{\nu+1+W}\right)\frac{S}{n-k},
\end{split}
\end{equation*}
where $W=\{V'C^{-1}D^{-1}V+\|V_*\|^2/\gamma\}/S$,
 dominate the UMVU estimators ($V$ and $S/(n-k)$) under the loss
\eqref{loss-1} 
if $\gamma \geq 1$ and $0< \nu \leq \min(\nu_1,\nu_2,\nu_3)$ where
\begin{equation*}
\begin{split}
 \nu_1 &= 4\frac{\sum(d_i/c_i)-2\max(d_i/c_i) +m/(n-k)}
{2\max(d_i/c_i)(n-k+2) +m} \\
\nu_2 &= \frac{ 4\{\sum(d_i/c_i)-\max(d_i/c_i)\} + 2m/(n-k)}
{(n-k-2)\max(d_i/c_i)+m} \\
 \nu_3 &= \frac{4}{m}\sum\frac{d_i}{c_i}.
\end{split}
\end{equation*}
\end{thm}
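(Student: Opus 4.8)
The plan is to compare risks directly. By Theorem~\ref{thm:case-2} the proposed generalized Bayes procedure is a plug-in normal density, so by \eqref{loss-1} its $D_1$ predictive risk is the scale-invariant quadratic risk of $\hat\theta_{\nu,C}$ plus $m$ times the entropy risk of $\hat\sigma^2_{\nu,C}$. Writing $g=\nu/(\nu+1+W)$ for the common shrinkage factor, I would set $\Delta=\Delta_1+m\,\Delta_2$, where $\Delta_1$ and $\Delta_2$ are the risk differences (proposed minus UMVU) for the $\theta$- and $\sigma^2$-components, and show $\Delta\le 0$ pointwise in $(\theta,\mu,\eta)$ whenever $\gamma\ge 1$ and $0<\nu\le\min(\nu_1,\nu_2,\nu_3)$; since by the discussion above the UMVU risk is constant minimax, this simultaneously establishes minimaxity. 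The decisive structural feature is that the same $g$ appears in both components, so the favorable terms generated by estimating $\sigma^2$ can offset the unavoidable penalty from shrinking $\theta$.

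For $\Delta_1$, I would expand $\|\hat\theta_{\nu,C}-\theta\|^2-\|V-\theta\|^2=-2g\,(V-\theta)'C^{-1}V+g^2 V'C^{-2}V$ and divide by $\sigma^2$. Because $V\sim N_l(\theta,\sigma^2 D)$ with independent coordinates, Stein's identity $E[(V_i-\theta_i)h]=\sigma^2 d_i\,E[\partial h/\partial V_i]$, applied with $h=V_i g/c_i$ and using $\partial g/\partial V_i=-(g^2/\nu)\,2V_i/(c_i d_i S)$, converts the cross-term into expectations of $g$ and of $g^2V_i^2/(c_i^2 S)$. For $\Delta_2$, the entropy-loss difference equals $-gS/\{(n-k)\sigma^2\}-\log(1-g)$; I would bound the non-polynomial term by $-\log(1-g)\le g/(1-g)=\nu/(1+W)$, valid since $0\le g<1$. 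The remaining $S$-expectations (with $S/\sigma^2\sim\chi^2_{n-k}$) are evaluated by the chi-square integration-by-parts identity $E[\partial_S h]=E[h]/(2\sigma^2)-\{(n-k)/2-1\}E[h/S]$, which is exactly what produces the degrees-of-freedom factors $n-k\pm 2$ appearing in $\nu_1$ and $\nu_2$; the derivative $\partial g/\partial S=(g^2/\nu)W/S$ supplies the cross-terms that link the two components.

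After these reductions $\Delta$ is an expectation of an explicit function of $g$ and of the forms $V'C^{-1}D^{-1}V$, $\|V_*\|^2$, $S$. The final step is to bound each $d_i/c_i$ by $\max_i(d_i/c_i)$ (keeping $\sum_i d_i/c_i$ where the full trace survives) and use $0<g\le\nu/(\nu+1)$, so that each of several groups of terms is dominated by an expression of the schematic form $E[g\,(A\nu-B)]$ with $A,B>0$ built from $\sum_i d_i/c_i$, $\max_i d_i/c_i$, $m$ and $n-k$. Requiring each bracket to be non-positive yields one threshold apiece, and since all groups must be controlled at once we need $\nu\le\min(\nu_1,\nu_2,\nu_3)$: heuristically $\nu_1,\nu_2$ control the first-order terms left after the two integrations by parts (the $n-k+2$ versus $n-k-2$ factors distinguishing them), while $\nu_3$ dominates the purely quadratic $g^2$-contribution against the $m$-weighted entropy term. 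The main obstacle is precisely this bookkeeping: because $g$ depends jointly on $V$, $V_*$ and $S$, both integrations by parts generate derivative-of-$g$ terms that must be collected and matched across the two loss components, and the slack introduced by the logarithmic bound and the $\max$-replacement is what forces three separate sufficient conditions rather than one clean threshold. The hypothesis $\gamma\ge 1$ enters as the monotonicity condition guaranteeing that the $\|V_*\|^2/\gamma$ contribution to $W$ carries the sign needed to keep the associated terms favorable, so that $V_*$, which informs $\sigma^2$, can only help.
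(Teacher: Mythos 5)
Your overall architecture is the same as the paper's: reduce via \eqref{loss-1} to a quadratic risk for $\theta$ plus $m$ times Stein's loss for $\sigma^2$; handle the mean part by Stein's identity (your derivative formulas $\partial g/\partial V_i$ and $\partial g/\partial S$ are correct, and carrying them through reproduces exactly the risk expression that the paper simply quotes from \cite{Maru-Straw-2005}); handle the variance part by the chi-square integration-by-parts identity (the paper's reference is \cite{Efron-Morris-1976}); replace $V'C^{-2}V/(V'C^{-1}D^{-1}V+\|V_*\|^2/\gamma)$ by $\max_i(d_i/c_i)$; and require nonnegativity of the three coefficients of the resulting quadratic in $W$, which is precisely where $\nu_1,\nu_2,\nu_3$ come from. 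One interpretive slip: $\gamma\ge 1$ plays no sign or monotonicity role in the risk analysis (only $\|V_*\|^2/\gamma\ge 0$ is used, which holds for any $\gamma>0$); it is needed so that the covariance $\eta^{-1}(\gamma/\lambda-1)I_{k-l}$ in \eqref{our-canonical-prior} is nonnegative definite for all $\lambda\in(0,1)$, i.e.\ so that $(\hat{\theta}_{\nu,C},\hat{\sigma}^2_{\nu,C})$ really is the generalized Bayes procedure of Theorem \ref{thm:case-2}.

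The genuine gap is your logarithmic bound: it is too weak to reach the stated constants. Write $g=\nu/(\nu+1+W)$, so $1-g=(1+W)/(\nu+1+W)$ and $g/(1-g)=\nu/(1+W)$. After the chi-square identity, the variance-part risk difference takes the form $E\bigl[-g-\{2/(\nu(n-k))\}g^2W-\log(1-g)\bigr]$, and the linear term $g$ in any expansion of $-\log(1-g)$ cancels the leading $-g$ \emph{exactly}, with no slack; everything then hinges on the size of the quadratic remainder. The paper uses $-\log(1-x)\le x+\tfrac{1}{2}x^2/(1-x)$, whose remainder is $\tfrac{1}{2}g^2/(1-g)=\tfrac{g}{2}\cdot\nu/(1+W)\le \tfrac{\nu}{2}g$. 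Your bound $-\log(1-g)\le g/(1-g)=g+g^2/(1-g)$ leaves the remainder $g^2/(1-g)\le\nu g$, exactly twice as large, and since the cancellation of the linear term is exact this factor of two cannot be recovered elsewhere. It propagates into every coefficient of the final quadratic $\psi(W)$: the constant coefficient becomes proportional to $4\sum(d_i/c_i)-2\nu m$ instead of $4\sum(d_i/c_i)-\nu m$, forcing $\nu\le\nu_3/2$; likewise $m$ is replaced by $2m$ in the denominators of $\nu_1$ and $\nu_2$. So the argument as proposed proves domination only on a strictly smaller interval of $\nu$ and does not establish the theorem for all $0<\nu\le\min(\nu_1,\nu_2,\nu_3)$. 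The repair is simple: use the sharper inequality $-\log(1-x)\le x+\tfrac{1}{2}x^2/(1-x)$ in place of $-\log(1-x)\le x/(1-x)$; with that single change your outline closes and coincides with the paper's proof.
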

\begin{proof}
 See Appendix.
\end{proof}
Clearly $\nu_2$ and $\nu_3 $ are always positive.
Now consider $\nu_1$. Assume $\nu_1$ is negative for fixed $C_0$.
But there exits $ g_0>1$ such that $C=g_0C_0$ makes $\nu_1$ positive.
Hence we can freely choose an ascending sequence of $c_i$'s which
guarantees the minimaxity of $ (\hat{\theta}_{\nu,C},\hat{\sigma}^2_{\nu,C}) $ 
and increased shrinkage of unstable components. 

\begin{remark}
We make some comments about domination results under the $D_1$ loss 
for the case of a known variance, say $\sigma^2=1$.
By \eqref{canonical-observation} and \eqref{loss-1}, 
the prediction problem under the $D_1$ loss
reduces to the problem of estimating an $l$-dimensional mean
vector $\theta$
under the quadratic loss $L_1(\hat{\theta},\theta)=\|\hat{\theta}-\theta\|^2$
in the case where there exists a sufficient statistic $V \sim N_l(\theta,D)$.
It is well known that 
the UMVU estimator $V$ is admissible when $l=1,2$ and inadmissible when
$l \geq 3$. Minimax admissible estimators for $l \geq 3$
have been proposed by many researchers including
\cite{Strawderman-1971}, \cite{Berger-1976}, \cite{Fourdrinier-etal-1998}, and
\cite{Maruyama-2003b}.
On the other hand, for KL (i.e.~$D_{-1}$) loss,
\cite{George-etal-2006} used some techniques including 
the heat equation and Stein's identity, and eventually
found a new identity 
which links KL risk reduction to Stein's unbiased estimate 
of risk reduction. 
Based on the link, they obtained sufficient conditions on the Bayesian predictive density 
for minimaxity.
Hence we expect that there should exist an analogous relationship between the
prediction problem under the $D_\alpha$ loss for $|\alpha|<1$ and
the problem of estimating the mean vector. 
As far as we know, this is still an open problem.
\end{remark}
%

\section{Concluding remarks} \label{sec:cr}
In this paper we have studied the construction 
and behavior of generalized Bayes predictive densities 
for normal linear models with unknown variance under $\alpha$-divergence loss. 
In particular we have shown that the best equivariant, 
(Bayes under the right invariant prior) and minimax predictive density under $D_1$
is inadmissible in all dimensions and for all residual degrees of freedom. 
We have found a class of improved hierarchical generalized Bayes procedures,
which gives a solution to Problem 2-1 of Section \ref{sec:intro}.

The domination results in this paper are closely related to those 
in \cite{Maru-Straw-2005, Maru-Straw-2006} for the respective problems 
of estimating the mean vector under quadratic loss 
and the variance under Stein's loss. 
In fact a key observation that aids in the current development 
is that the Bayes estimator under $D_1$ loss is a plug-in estimator normal density 
with mean vector and variance closely related to those of the above papers, 
and that the $D_1$ loss is the sum of a quadratic loss in the mean 
and Stein's loss for the variance.

We expect that an extension of a hierarchical prior given in Section \ref{sec:case-1},
for the prediction problem under the $D_{\alpha}$ loss for $-1 \leq \alpha <1$,
can form a basis to solve Problem 2-2 of Section \ref{sec:intro}.
To date, unfortunately, we have been less successful in extending the domination results 
to the full class of $\alpha$-divergence losses.

\appendix
\section{Appendix section}

\subsection{Proof of Theorem \ref{thm:case-1}}
The Bayesian predictive density $\hat{p}_\alpha(\tilde{y}|y)$
under the divergence $D_\alpha$ for
general $\alpha \in [-1,1)$ 
is proportional to
\begin{equation}
\left[
\iiint \{p(\tilde{y}|\theta,\eta)\}^{\frac{1-\alpha}{2}}p(v|\theta,\eta)
p(v_*|\mu,\eta)p(s|\eta)\pi(\theta,\mu,\eta)d\theta d\mu d\eta 
\right]^{\frac{2}{1-\alpha}},
\end{equation}
and hence the the integral in brackets is concretely written as
\begin{equation}
\label{integral-in-brackets}
\begin{split}
& \iiiint \eta^{\frac{m}{2}\frac{1-\alpha}{2}}
\exp\left(-\frac{\eta}{2}\frac{1-\alpha}{2}\| \tilde{y}-Q\theta\|^2\right) 
\eta^{\frac{n-k}{2}}\exp\left(-\frac{\eta s}{2}\right)  \\
& \quad \times \eta^{\frac{l}{2}}\exp\left( -\frac{\eta}{2}
(v-\theta)'D^{-1}(v-\theta)\right)
\eta^{\frac{k-l}{2}}\exp\left( -\frac{\eta}{2} \|v_*-\mu\|^2\right) 
 \\
& \quad \times
\frac{\lambda^{l/2}\eta^{l/2}}{\prod (c_i-\lambda)^{1/2}}
\exp\left( -\frac{\eta}{2}\theta' \left(D^{-1}+\frac{1-\alpha}{2}I_l
\right)\left(C/\lambda-I_l\right)^{-1}\theta\right)  \\
& \quad \times \left(\frac{\lambda \eta}{\gamma-\lambda}\right)^{(k-l)/2}
\exp\left( -\frac{\eta}{2} \frac{\lambda \|\mu\|^2}{\gamma-\lambda} \right)
\eta^a\lambda^a(1-\lambda)^b d\theta d\mu d\eta d\lambda. 
\end{split}
\end{equation}
To aid in the simplification of 
the integration with respect to $\theta$, we first
re-express those terms involving $\theta$ by completing the square,
and neglecting, for now, the factor $ \eta(1-\alpha)/4$.
Let $D_*=\{(1-\alpha)/2\}D$. Then
\begin{equation*}
\begin{split}
& \| \tilde{y} -Q\theta\|^2+ (v-\theta)'D_*^{-1}(v-\theta)+\theta'(I+D_*^{-1})
(C/\lambda-I)^{-1}\theta \\
&= \theta'(I+D_*^{-1})(I-C^{-1}\lambda)^{-1}\theta
-2\theta'(Q'\tilde{y}+D_*^{-1}v)+\|\tilde{y}\|^2+v'D_*^{-1}v \\
&= \{\theta - (I+D_*^{-1})^{-1}(I-C^{-1}\lambda)(Q'\tilde{y}+D_*^{-1}v)\}'
\{(I+D_*^{-1})(I-C^{-1}\lambda)^{-1}\} \\
& \qquad \times \{\theta -
 (I+D_*^{-1})^{-1}(I-C^{-1}\lambda)(Q'\tilde{y}+D_*^{-1}v)\} \\
& \quad - (Q'\tilde{y}+D_*^{-1}v)'\{(I+D_*^{-1})^{-1}(I-C^{-1}\lambda)\}
(Q'\tilde{y}+D_*^{-1}v)  +\|\tilde{y}\|^2+v'D_*^{-1}v .
\end{split}
\end{equation*}
The ``residual term'', 
\begin{equation*}
- (Q'\tilde{y}+D_*^{-1}v)'\{(I+D_*^{-1})^{-1}(I-C^{-1}\lambda)\}
 (Q'\tilde{y}+D_*^{-1}v)  +\|\tilde{y}\|^2+v'D_*^{-1}v ,
\end{equation*}
may be expressed as $ A+ \lambda\{B-A\}$
where
\begin{equation}
\label{AA}
\begin{split}
A &= A(\tilde{y},v,D_*,Q) \\
&= \|\tilde{y}\|^2+v'D_*^{-1}v
 -(Q'\tilde{y}+D_*^{-1}v)'(I+D_*^{-1})^{-1}
(Q'\tilde{y}+D_*^{-1}v)    \\
& = \{2/(1-\alpha)\}(\tilde{y}-Qv)'\Sigma_U^{-1}(\tilde{y}-Qv), 
\end{split}
\end{equation}
where $\Sigma_U $ is given by \eqref{Sigma-U-etc} and
\begin{equation}
\label{BB}
\begin{split}
B & = B(\tilde{y},v,C,D_*,Q) \\
&= \|\tilde{y}\|^2+v'D_*^{-1}v 
 - (Q'\tilde{y}+D_*^{-1}v)'(I+D_*^{-1})^{-1}(I-C^{-1})(Q'\tilde{y}+D_*^{-1}v)   \\
& =\{2/(1-\alpha)\} \left\{(\tilde{y}-Q\hat{\theta}_B)'\Sigma_B^{-1}
 (\tilde{y}-Q\hat{\theta}_B) \right\}  \\
& \qquad  +\{2/(1-\alpha)\}
\left\{v'(\{(1-\alpha)/2\}D+I)D^{-1}(C+\{(1-\alpha)/2\}D)^{-1}v\right\},
\end{split}
\end{equation}
where $\hat{\theta}_B$ and $\Sigma_B$ are given by \eqref{Sigma-U-etc}.
The third equalities in \eqref{AA} and \eqref{BB} 
will be proved in Lemma \ref{lemma-appendix} below.
Similarly we may re-express the terms involving $\mu$ as
\begin{equation*}
 \|v_*-\mu\|^2+ \frac{\lambda \|\mu\|^2 }{\gamma-\lambda}
 =\frac{\gamma}{\gamma-\lambda}\left\| \mu 
\left(1-\{1-\lambda/\gamma\}v_*\right)\right\|^2+ \lambda
\frac{\|v_*\|^2}{\gamma}.
\end{equation*}
After integration with respect to $\theta$ and $\mu$,
the integral given by \eqref{integral-in-brackets} is proportional to
\begin{equation}
\label{integral-lambda}
\begin{split}
& \iint \eta^{(1-\alpha)m/4+n/2+a}\lambda^{k/2+a}(1-\lambda)^{b}
 \\ 
& \qquad \exp\left(-\frac{\eta}{2}\left\{ \frac{1-\alpha}{2}A+s +\lambda
\left( \frac{1-\alpha}{2}(B-A)+\frac{\|v_*\|^2}{\gamma}
 \right)\right\}\right) d\eta d\lambda  \\
& \propto \int_0^1 \lambda^{k/2+a}(1-\lambda)^{(1-\alpha)m/4+(n-k)/2-1}
 \\ 
& \quad \left\{ \frac{1-\alpha}{2}A+s +\lambda
\left( \frac{1-\alpha}{2}(B-A)+\frac{\|v_*\|^2}{\gamma}
 \right)\right\}^{-(1-\alpha)m/4-n/2-a-1} d\lambda. 
\end{split}
\end{equation}
Note that in an identity given by 
\cite{Maru-Straw-2005}, (See page 1758)
\begin{equation} 
\label{annals-formula}
\begin{split}
& \int_{0}^{1}\lambda^{\alpha}(1-\lambda)^{\beta}(1+w \lambda)^{-\gamma}d
  \lambda \\
& \qquad = \frac{1}{(w+1)^{\alpha+1}}\int_{0}^{1}t^{\alpha}(1-t)^{\beta}
\left\{1-\frac{tw}{w+1} \right\}^{-\alpha-\beta+\gamma-2}dt,
\end{split}
\end{equation}
the integral of the right-hand side reduces the beta function
$Be(\alpha+1,\beta+1)$ when $ -\alpha-\beta+\gamma-2=0 $.
Hence the integral \eqref{integral-lambda} is exactly proportional to
\begin{equation} 
\label{integral-simple}
  \left( \frac{1-\alpha}{2}A+s \right)^{-(1-\alpha)m/4-(n-k)/2 } 
 \left( \frac{1-\alpha}{2}B+s +\frac{\|v_*\|^2}{\gamma}
 \right)^{-k/2-a-1}.
\end{equation}
Since the Bayesian predictive density $\hat{p}_\alpha(\tilde{y}|y)$
with respect to the prior $\pi(\theta,\mu,\eta)$ is 
proportional to the integral \eqref{integral-simple} to the $2/(1-\alpha)$
power, the theorem follows.

\begin{lemma} \label{lemma-appendix}
Let $F$ and $D_*$ be diagonal matrix. The matrix $Q$ is assumed to
 satisfy $Q'Q=I$. Then
\begin{equation*}
\begin{split}
& G(\tilde{y},v,F,D_*,Q)  \\
&= 
  \|\tilde{y}\|^2+v'D_*^{-1}v - 
(Q'\tilde{y}+D_*^{-1}v)'(I+D_*^{-1})^{-1}F
(Q'\tilde{y}+D_*^{-1}v) 
\end{split}
\end{equation*}
is re-expressed as
\begin{equation*}
\begin{split}
& \{\tilde{y}- QF(I+D_*(I-F))^{-1}v\}'\{I+QFD_*(I+D_*(I-F))^{-1}Q'\}^{-1} \\
& \quad \qquad \times \{\tilde{y}- QF(I+D_*(I-F))^{-1}v\} \\
& \qquad + v'(D_*+1)(I-F)D_*^{-1}(I+D_*(I-F))^{-1}v.
\end{split}
\end{equation*}
\end{lemma}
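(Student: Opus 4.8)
The plan is to treat $G$ as a quadratic form in $\tilde y$ and to complete the square, exploiting that every matrix other than $Q$ is diagonal (hence all mutually commuting) while $Q'Q=I$. First I would set $K=(I+D_*^{-1})^{-1}F$, itself diagonal, so that expanding the cross term and using $Q'Q=I$ gives
\begin{equation*}
G=\tilde y'(I-QKQ')\tilde y-2\tilde y'QKD_*^{-1}v+v'\bigl(D_*^{-1}-D_*^{-1}KD_*^{-1}\bigr)v.
\end{equation*}
Writing $A=I-QKQ'$ and $b=QKD_*^{-1}v$, completing the square yields
\begin{equation*}
G=(\tilde y-A^{-1}b)'A(\tilde y-A^{-1}b)+v'\bigl(D_*^{-1}-D_*^{-1}KD_*^{-1}\bigr)v-b'A^{-1}b,
\end{equation*}
and it then remains to match the quadratic-form matrix $A$, the mean $A^{-1}b$, and the constant with the three pieces in the statement.

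Second, I would resolve the inverses with push-through (Woodbury) identities, available precisely because $Q'Q=I$. For the quadratic-form matrix I would use $(I-QKQ')^{-1}=I+Q(K^{-1}-I)^{-1}Q'$ and simplify the diagonal factor via $I+D_*^{-1}-F=D_*^{-1}(I+D_*(I-F))$, obtaining $(K^{-1}-I)^{-1}=FD_*(I+D_*(I-F))^{-1}$; hence $A^{-1}=I+QFD_*(I+D_*(I-F))^{-1}Q'$, which is exactly the matrix whose inverse appears in the lemma. For the mean I would apply $(I-QKQ')^{-1}Q=Q(I-K)^{-1}$ to get $A^{-1}b=Q(I-K)^{-1}KD_*^{-1}v$, and then reduce the diagonal factor $(I-K)^{-1}KD_*^{-1}$ to $F(I+D_*(I-F))^{-1}$, matching the claimed mean $QF(I+D_*(I-F))^{-1}v$.

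Finally, for the constant term I would use $Q'A^{-1}Q=(I-K)^{-1}$ (again a consequence of $Q'Q=I$), so that $b'A^{-1}b=v'D_*^{-1}K(I-K)^{-1}KD_*^{-1}v$ and the residual constant becomes $v'\bigl[D_*^{-1}-D_*^{-1}KD_*^{-1}-D_*^{-1}K(I-K)^{-1}KD_*^{-1}\bigr]v$. Because all these factors are diagonal and commute, this collapses to a one-dimensional identity in each coordinate, which I would check equals the corresponding entry of $(D_*+I)(I-F)D_*^{-1}(I+D_*(I-F))^{-1}$ after substituting $k_i=f_id_i/(1+d_i)$. I expect the main obstacle to be purely organizational: keeping the chain of inverse identities mutually consistent and confirming the constant matches the stated form. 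Diagonality reduces everything to scalar algebra and the three push-through identities do the real work, so no genuinely hard step arises; the principal risk is bookkeeping error, most acutely in the constant term.
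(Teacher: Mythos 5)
Your proposal is correct and follows essentially the same route as the paper's own proof: both expand $G$ as a quadratic form in $\tilde y$, complete the square, invert $I-Q(I+D_*^{-1})^{-1}FQ'$ via the same push-through identity (yielding $I+QFD_*(I+D_*(I-F))^{-1}Q'$), and then match the residual quadratic form in $v$ by a diagonal (scalar) computation. The only difference is notational — your $K=(I+D_*^{-1})^{-1}F$ and the abstract Woodbury phrasing versus the paper's direct manipulation — so no substantive gap exists.
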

\begin{proof}
The function $ G(\tilde{y},v,F,D_*,Q)$ is re-expressed as
\begin{equation*}
\begin{split}
 G &= 
 \tilde{y}'(I-Q(I+D_*^{-1})^{-1}FQ')\tilde{y}- 2\tilde{y}'Q(I+D_*)^{-1}Fv \\
&\qquad +v'D_*^{-1}\{I-(I+D_*)^{-1}F\}v .
\end{split}
\end{equation*}
Since
\begin{equation}
 (I-Q(I+D_*^{-1})^{-1}FQ')^{-1}=
I+QFD_*(I+D_*(I-F))^{-1}Q',
\end{equation}
we obtain
\begin{equation*}
  \{I+QFD_*(I+D_*(I-F))^{-1}Q'\}Q(I+D_*)^{-1}F 
 = QF(I+D_*(I-F))^{-1}
\end{equation*}
and 
\begin{equation*}
\begin{split}
& F(I+D_*)^{-1}Q' \{I+QFD_*(I+D_*(I-F))^{-1}Q'\}Q(I+D_*)^{-1}F \\
& = F^2(I+D_*)^{-1}(I+D_*(I-F))^{-1}.
\end{split}
\end{equation*}
Hence we have
\begin{equation*}
\begin{split}
G &= \{\tilde{y}- QF(I+D_*(I-F))^{-1}v\}'(I+QFD_*(I+D_*(I-F))^{-1}Q')^{-1} \\
& \quad \qquad \times \{\tilde{y}- QF(I+D_*(I-F))^{-1}v\} \\
& \qquad  -v'F^2(I+D_*)^{-1}(I+D_*(I-F))^{-1}v  + v'D_*^{-1}\{I-(I+D_*)^{-1}F\}v.
\end{split}
\end{equation*}
Since the matrix for quadratic form of $v$ in the ``residual term''
may be written as
\begin{equation*}
\begin{split}
& D_*^{-1}\{I-(I+D_*)^{-1}F\} -F^2(I+D_*)^{-1}(I+D_*(I-F))^{-1} \\
& =(D_*+I)(I-F)D_*^{-1}(I+D_*(I-F))^{-1},
\end{split}
\end{equation*}
the lemma follows.
\end{proof}

\subsection{Proof of Theorem \ref{thm:case-2}}
The Bayes predictive density $\hat{p}_\alpha(\tilde{y}|y)$
under the divergence $D_\alpha$ for $\alpha=1$
is proportional to
\begin{equation}
\begin{split}
& \exp \left\{
\iiint \log p(\tilde{y}|\theta,\eta)p(v|\theta,\eta)
p(v_*|\mu,\eta)p(s|\eta)\pi(\theta,\mu,\eta)d\theta d\mu d\eta 
\right\} \\
&\propto \exp\left\{ \int \left(-\eta\frac{\|\tilde{y}-
 Q\theta\|^2}{2} \right)
\pi(\theta,\mu,\eta|v,v_*,s)d\theta d\mu d\eta \right\}  \\
&\propto \exp\left( -\frac{E(\eta|v,v_*,s)}{2}
\left\| \tilde{y} - Q
\frac{E[\eta\theta|v,v_*,s]}{E[\eta|v,v_*,s]}
\right\|^2 \right). 
\end{split}
\end{equation}
Hence the Bayes solution with respect to the prior
density $\pi(\theta,\mu,\sigma^2)$ under $D_1$
is the plug-in normal density
\begin{equation*}
\hat{p}_\alpha(\tilde{y}|y) =
\phi_m(\tilde{y}, Q\hat{\theta}_\pi, \hat{\sigma}_\pi^2)
\end{equation*}
where  $\phi_m(\cdot, Q\hat{\theta}_\pi, \hat{\sigma}_\pi^2)$ 
denotes the $m$-variate normal density with the mean vector
$Q\hat{\theta}_\pi $ and
the covariance matrix $\hat{\sigma}_\pi^2 I_m$ and where
$ \hat{\theta}_\pi$ and $ \hat{\sigma}^2_\pi$ are
given by
\begin{equation} 
\label{marginal-estimator}
\begin{split}
 \hat{\theta}_\pi & =\frac{E[\eta\theta|y]}
{E[\eta|y]}= v - \frac{D \nabla_v m(v,v_*,s)}
{2\{\partial/\partial s\}m(v,v_*,s)} , \\
\hat{\sigma}^2_\pi &= \frac{1}{E[\eta|y]} = -\frac{m(v,v_*,s)}
{2\{\partial/\partial s\}m(v,v_*,s)}, 
\end{split}
\end{equation}
and where $m(v,v_*,s)$ is the marginal density given by
\begin{equation}
 m(v,v_*,s)= \iiint p(v|\theta,\eta)p(v_*|\mu,\eta)p(s|\eta)
\pi(\theta,\mu,\eta)d\theta d\mu d\eta.
\end{equation}
Now we consider the marginal density of $(v,v_*,s)$ 
with respect to the prior $\pi(\theta,\mu,\eta)$,
\eqref{our-canonical-prior} with $\alpha=1$.
Using essentially the same calculations as in Section \ref{sec:case-1},
we obtain the  marginal density in the relatively simple form
\begin{equation}
 m(v,v_*,s) \propto s^{-(n-k)/2}
(v'C^{-1}D^{-1}v+\|v_*\|^2/\gamma+s)^{-(k/2+a+1)}.
\end{equation}
From the expression in \eqref{marginal-estimator},
a straightforward calculation gives the
 the estimators of $\theta$ and $\sigma^2$ in the simple form
\begin{equation}
\label{simple-form-estimator}
\begin{split}
 \hat{\theta}_{\nu,C} &=\left( I - \frac{\nu}{\nu+1+W}C^{-1}\right)V, \\
\hat{\sigma}^2_{\nu,C} &= \left(1- \frac{\nu}{\nu+1+W}\right)\frac{S}{n-k},
\end{split}
\end{equation}
where $W=\{V'C^{-1}D^{-1}V+\|V_*\|^2/\gamma\}/S$, respectively. 
This completes the proof.

\subsection{Proof of Theorem \ref{thm:minimaxity}}
\cite{Maru-Straw-2005} showed that, under the $L_1 $ loss,
the risk function of a general shrinkage estimator 
\begin{equation*}
\hat{\theta}_\phi=\left(I - \frac{\phi(W)}{W} C^{-1}\right)V
\end{equation*}
with suitable $\phi$ is given by
\begin{equation*}
\begin{split}
 &  E\left[ L_1(\hat{\theta}_\phi,\theta,\sigma^2)\right]
=E\left[ \frac{\| \hat{\theta}_\phi - \theta\|^2}{\sigma^2}\right] \\
& =  E\left[ \frac{\| V - \theta\|^2}{\sigma^2}\right]+
E \left[ \frac{\phi(W)}{W} \left\{ 
\psi(V,V_*,C,D,\nu) \bigg( (n-k+2)\phi(W)  \right. \right. \\
& \quad   \left. \left.
+4 \left\{ 1- \frac{W\phi'(W)}{\phi(W)}(1+\phi(W) \right\} 
 \bigg) -2\sum_{i=1}^l \frac{d_i}{c_i} \right\} \right],
\end{split}
\end{equation*}
where $\psi(v,v_*,C,D,\nu)$ is given by
\begin{equation*}
\psi(v,v_*,C,D,\nu)= \frac{v'C^{-2}v}{v'C^{-1}D^{-1}v+\|v_*\|^2/\gamma}.
\end{equation*}
For $\phi_\nu(w)=\nu w/(\nu+1+w)$, we have
\begin{equation*}
\begin{split}
&(n-k+2)\phi(w)  
+4 \left\{ 1- \frac{w\phi'(w)}{\phi(w)}(1+\phi(w) \right\} \\
&=  \frac{\{(n-k+2)\nu+4\}w^2+
 (\nu+1)\{\nu(n-k-2)+4\}w}{(1+\nu+w)^2 }
\end{split}
\end{equation*}
which is always positive when $n-k-2 \geq 0$.
Since $ \psi $ is bounded from above by $\max_{1 \leq i \leq l }
 d_i/c_i$, 
the risk function of $\hat{\theta}_\nu$ satisfies
\begin{equation*}
\begin{split}
&  E\left[L_1( \hat{\theta}_{\nu,C}, \theta, \sigma^2)\right]\\
& \leq \mbox{MR}_{\theta} +
E \left[ \frac{\nu}{1+\nu+W} \left\{ -2\sum \frac{d_i}{c_i} + 
\max \frac{d_i}{c_i}  \frac{\{(n-k+2)\nu+4\}W^2}{(1+\nu+W)^2}
 \right. \right. \\
& \quad \left. \left. + \max \frac{d_i}{c_i}
\frac{(\nu+1)\{\nu(n-k-2)+4\}W}{(1+\nu+W)^2} \right\} \right],
\end{split}
\end{equation*}
where $\mbox{MR}_{\theta}=\mbox{tr}D$.

Next we consider the risk function of 
$\hat{\sigma}^2_\phi=(1-\phi(W)/W)S/(n-k)$
where $ 0 < \phi(w)/w < 1$, which is given by
\begin{equation*}
\begin{split}
 E[L_2(\hat{\sigma}^2_\nu, \sigma^2)] 
&= E\left[ \left(1-\frac{\phi(W)}{W}\right)\frac{S}{(n-k)\sigma^2}
     -\log \frac{S}{(n-k)\sigma^2} \right.\\
& \quad \left.
-\log\left(1-\frac{\phi(W)}{W}\right) 
-1
    \right] \\
&= \mbox{MR}_{\sigma^2}
+E\left[ -\frac{\phi(W)}{W\sigma^2}\frac{S}{n-k}-\log
\left(1-\frac{\phi(W)}{W}\right)\right],
\end{split}
\end{equation*}
where $ \mbox{MR}_{\sigma^2}=  \log \gamma - \Gamma'(\gamma)/\Gamma(\gamma)$
and $\gamma=(n-k)/2$.
By the chi-square identity (See e.g.~\cite{Efron-Morris-1976}),
we have
\begin{equation*}
 E\left[ \frac{\phi(W)S}{W\sigma^2}\right]=
E\left[ (n-k+2)\frac{\phi(W)}{W}-2\phi'(W)\right].
\end{equation*}
Also using the relation
\begin{equation*}
-\log(1-x)=\sum_{i=1}^{\infty}\frac{x^{i}}{i} \leq x 
+\frac{1}{2}\frac{x^{2}}{1-x},
\end{equation*}
for $0 < x<1$, we have
\begin{equation*}
\begin{split}
& E[L_2(\hat{\sigma}^2_\nu, \sigma^2)]  \\
&\leq \mbox{MR}_{\sigma^2}
+E\left[ \frac{\phi(W)}{W}
\left\{ \frac{2}{n-k}\left( \frac{W\phi'(W)}{\phi(W)} -1 \right) 
+ \frac{1}{2}\max\frac{\phi(w)/w}{1-\phi(w)/w}
\right\} \right].
\end{split}
\end{equation*}
For $\phi(w)=\nu w/(\nu+1+w)$, we have
\begin{equation*}
\begin{split}
& E[L_2(\hat{\sigma}^2_{\nu,C}, \sigma^2)] \\
&\leq \mbox{MR}_{\sigma^2}
+E\left[ \frac{\nu}{1+\nu+W}
\left\{ -\frac{2}{n-k}\frac{W}{1+\nu+W} 
+ \frac{\nu}{2}
\right\} \right].
\end{split}
 \end{equation*}
Hence
\begin{equation*}
\frac{1}{2}E\left[L_1( \hat{\theta}_{\nu,C}, \theta, \sigma^2)\right]
+ \frac{m}{2} E[L_2(\hat{\sigma}^2_{\nu,C}, \sigma^2)]
 \leq \mbox{MR}_{\theta,\sigma^2}
- \frac{\nu}{2} E\left[ \frac{\psi(W)}{(1+\nu+W)^3}\right]
\end{equation*}
where $\mbox{MR}_{\theta,\sigma^2}$ is the minimax risk given by
\eqref{m-risk} and 
\begin{equation*}
\begin{split}
&  \psi(w) \\
& = \frac{w^2}{2}\left( 4\left\{ \sum\frac{d_i}{c_i}
-2\max \frac{d_i}{c_i} +\frac{m}{n-k}\right\} 
-\nu\left\{ 2\max \frac{d_i}{c_i}(n-k+2) +m \right\}\right) \\
& \quad +(\nu+1)w \left( 4\left\{ \sum\frac{d_i}{c_i}
-\max \frac{d_i}{c_i} \right\}+ \frac{2m}{n-k} 
-\nu \left\{ (n-k-2)\max \frac{d_i}{c_i}+m\right\}\right) \\
& \quad + \frac{(1+\nu)^2}{2}\left( 4 \sum\frac{d_i}{c_i} - \nu m
 \right) \geq 0.
\end{split}
\end{equation*}
Hence the theorem follows.

\end{document}